\newtheorem{theorem}{Theorem}[section]
\newtheorem{corollary}[theorem]{Corollary}
\newtheorem{remark}[theorem]{Remark}
\newenvironment{proof}{\noindent {\bf
Proof}.\ }{\proofbox\par\smallskip\par}
\def\numberlikeadb{\global\def\theequation{\thesection.\arabic{equation}}}
\newcommand{\halmos}{\rule{1ex}{1.4ex}}
\newcommand{\proofbox}{\hspace*{\fill}\mbox{$\halmos$}}
\newcommand{\pr}{{\mathbb P}}
\newcommand{\reals}{{R}}
\newcommand{\ints}{\mathbb{Z}}
\newcommand{\re}{\mbox{\bf Re}}
\newcommand{\eqa}{\begin{eqnarray}}
\newcommand{\ena}{\end{eqnarray}}
\newcommand{\eq}{\begin{equation}}
\newcommand{\en}{\end{equation}}
\newcommand{\eqs}{\begin{eqnarray*}}
\newcommand{\ens}{\end{eqnarray*}}
\def\zz{\ints}
\def\l{\lambda}
\def\a{\alpha}
\def\b{\beta}
\def\g{\gamma}
\def\d{\delta}
\def\D{\Delta}
\def\e{\varepsilon}
\def\h{\eta}
\def\z{\zeta}
\def\th{\theta}
\def\k{\kappa}
\def\m{\mu}
\def\n{\nu}
\def\s{\sigma}
\def\t{\tau}
\def\f{\varphi}
\def\nin{\noindent}
\def\msk{\medskip}
\def\Blb{\left\{}
\def\Brb{\right\}}
\def\giv{\,|\,}
\def\non{\nonumber}
\def\Eq{\ =\ }
\def\Le{\ \le\ }
\def\sji{\sum_{j\ge1}}
\def\sio{\sum_{i\ge0}}
\def\Ref#1{{\rm (\ref{#1})}}
\def\bp{\begin{proof}}
\def\ep{\end{proof}}
\def\bone{{\bf 1}}
\def\bl{\lb}
\def\xx{{\cal X}}
\def\ignore#1{}
\def\ex{{\mathbb E}}
\def\tmax{t_{\max}}
\def\Z{\ints}
\def\R{\reals}
\def\jj{{\mathcal J}}
\def\sjj{\sum_{J\in \jj}}
\def\uis{^{[1]}}
\def\uts{^{[2]}}
\def\tx{{\tilde x}}
\def\ui{^{(1)}}
\def\ut{^{(2)}}
\def\uh{^{(3)}}
\def\uf{^{(4)}}
\def\bz{\bar\z}
\def\re{\mathbb{R}}
\def\bg{{\bar \g}}
\def\bd{{\bar \d}}
\def\bl{{\bar \l}}
\def\zz{{\cal Z}}
\def\Def{\ :=\ }
\def\sjZ{\sum_{z\in\zz}}
\def\nat{{\mathbb N}}
\def\slid{\sum_{l=1}^d}
\def\xx{{\cal X}}
\def\xxs{{\cal X}'}
\def\sjtZ{{\sum_{z \in \tzz}}}
\def\sjZi{{\sum_{\bj \in \zz_1}}}
\def\tzz{{\widetilde\zz}}
\def\hs{\hat\s}
\def\bm{{\bar m}}
\def\bi{{\mathbf i}}
\def\bj{{\mathbf j}}
\def\pp{{\mathcal P}}
\def\aa{{\mathcal A}}
\def\uK{{^{(K)}}}
\def\uk{{^{(k)}}}
\def\ukd{{^{(k')}}}
\def\yy{{\cal Y}}
\def\var{{\rm Var\,}}
\def\lti{\lim_{t\to\infty}}
\begin{document}

\title{Individual and patch behaviour in structured metapopulation models}
\author{
A. D. Barbour\footnote{Institut f\"ur Mathematik, Universit\"at Z\"urich,
Winterthurertrasse 190, CH-8057 Z\"URICH; email: {\tt a.d.barbour@math.uzh.ch};
work begun while ADB was Saw Swee Hock 
Professor of Statistics at the National University of Singapore, and
supported in part by Australian Research Council Grants Nos DP120102728 and DP120102398.\msk}
\ and
Malwina Luczak\footnote{School of Mathematical Sciences, QMUL, Mile End Road, London E1 4NS, UK; 
email: {\tt m.luczak@qmul.ac.uk}; supported by an EPSRC Leadership Fellowship, grant reference EP/J004022/2, 
and in part by Australian Research Council Grant No DP120102398.}
\\ \\
Universit\"at Z\"urich and Queen Mary University, London
}

\date{}
\maketitle

\begin{abstract}
Density dependent Markov population processes with countably many types
can often be well approximated over finite time intervals by the solution 
of the differential equations that describe their average drift, 
provided that the total population size is large. They also  
exhibit diffusive stochastic fluctuations on a smaller scale about this 
deterministic path.  Here, it is shown that the individuals in such processes experience
an almost deterministic environment.  Small groups of individuals
behave almost independently of one another, evolving as Markov jump
processes, whose transition rates are prescribed functions of time.
In the context of metapopulation models, we show that `individuals'
can represent either patches or the individuals that migrate among
the patches; in host--parasite systems, they can represent both hosts and
parasites.
\end{abstract}

 \noindent
{\it Keywords:}  Markov population processes, propagation of chaos, \hfil\break
{}\hglue1in metapopulation, host parasite systems \\
{\it AMS subject classification:} 92D30, 60J27, 60B12 \\
{\it Running head:} Structured metapopulation models

\section{Introduction}\label{introduction}
\setcounter{equation}{0}

In a series of papers motivated by models of structured metapopulations 
(Levins 1969, Hanski \& Gilpin 1991) and parasitic disease transmission (Kretzschmar 1993),
the authors have extended Kurtz's (1970, 1971) theory to provide
laws of large numbers and central limit theorems for Markov population
processes with countably many types of individual, together with estimates
of the approximation errors: see Barbour \& Luczak [BL]~(2008, 2012a,b).
These theorems provide a good description of the overall behaviour of
such processes, when the population size is large.  However, as observed 
by L\'eonard~(1990), many ecological models, when seen from the perspective 
of the individuals themselves, can be interpreted as interacting particle 
systems.  It is then of interest to be able to describe the behaviour of
(small groups of) individuals within the large system.  Under very stringent
assumptions on the transition rates, in particular requiring that they be
uniformly bounded, he proves a `propagation of chaos' theorem, showing
that individuals evolve almost independently of one another, as Markov
processes whose transition rates are determined by the bulk behaviour
of the system.  

In this paper, we establish an analogous result for
systems with countably many types, under much less restrictive conditions.
We formulate a model that is general enough to encompass many host
parasite systems and structured metapopulation models.
The main tool used in showing the asymptotic independence of individuals
in such processes is to couple the process describing
the evolution of individuals in the original system with one in which they
evolve independently.  The coupling is constructed by matching the transition 
rates in the two processes, and the argument is described in Section~\ref{main}.  

In order to show that the coupling is close,
we rely on the quantitative law of large numbers proved in [BL]~(2012a).
The conditions needed for the law of large numbers have already been shown to be
satisfied for a number of examples from the literature, 
including the models of Arrigoni~(2003), Barbour \& Kafetzaki~(1993), Kretzschmar~(1993)
and Luchsinger~(2001a,b).  However, some work
is required to find explicit conditions based on the parameters of our general model
under which the law holds;
this is accomplished in Section~\ref{LLN}.  The paper concludes with examples taken
from Metz \& Gyllenberg~(2001) and from Kretzschmar~(1993).

\section{Main results}\label{main}
\setcounter{equation}{0}

We begin by formulating our models in a way which explicitly reflects their origins
in metapopulation and parasitic disease modelling.  The basic description is in 
terms of the numbers of patches of each of a countable number of types.
The type of a patch is determined by the numbers of animals of each of~$d$ different
varieties present in the patch, indexed by $\bi = (i_1,\ldots,i_d) \in \Z_+^d$.   For instance,
a patch may represent a host, and its type the numbers of parasites of various different 
species that it harbours.  However, an animal's variety may also indicate its developmental
stage, or its infection status, so that its variety may change over its lifetime.  
We also define~$d$ further types, to account
for animals of the different varieties that are in transit between patches. Thus the possible patch types 
are indexed by $\zz := \zz_1 \cup \zz_2$, where $\zz_1 = \Z_+^d$ and 
$\zz_2 =\{1,\ldots,d\}$.  
In these terms, the state space is expressed as 
$\xx := \{X \in\Z_+^{\zz},\,\sjZ X_z < \infty\}$.  The interpretation is that 
$X_\bi$ records the number of patches of type~$\bi$,  $\bi\in\zz_1$,   whereas $X_{l}$, 
$1\le l\le d$, denotes the number of migrating animals of variety~$l$.
The restriction $\sjZ X_z < \infty$ in the definition of~$\xx$ constrains total numbers of patches
and animals to be finite.   Our model for the evolution of the metapopulation consists of a
family~$X^N := (X^N(t),\,t\ge0)$ of pure jump Markov processes on~$\xx$, indexed by $N \in \nat$,
with~$N$  to be thought of as a typical number of patches in the process~$X^N$.   Writing $e(z)$ for the 
$z$-coordinate vector in~$\re_+^{\zz}$, $z\in\zz$, and $e_{l}$ for the $l$-th coordinate vector in~$\Z^d$, 
the transition rates for~$X^N$ are assumed to be given by
$$
\begin{array}{rllllllr}
  {\rm I}:& \ X & \to &X + e(\bj) - e(\bi) & \\
    & &&\qquad\qquad\quad\ \
    \mbox{at rate}\quad X_\bi\{\bl_{\bi\bj} + \l_{\bi\bj}(x)\}, &\ \bi,\bj \in \zz_1; & \\[1ex]
  {\rm II}:& \  X & \to &X +  e(\bi)      
      \qquad\mbox{at rate}\quad N\b_\bi(x), &\ \bi \in \zz_1;  & \\[1ex]
  {\rm III}:& \  X & \to &X - e(\bi)       
    \qquad  \mbox{at rate}\quad X_\bi\{\bd_{\bi} + \d_{\bi}(x)\}, &\ \bi \in \zz_1; & \\[1ex]
   {\rm IV}:& \ X & \to &X + e(l) + e(\bi - e_l) - e(\bi) \\
     &&&\qquad\qquad\quad\ \ \mbox{at rate}\quad X_\bi\{\bg_{\bi l} + \g_{\bi l}(x)\},  
                                                       &\ \bi \in \zz_1,\,1\le l\le d; & \\[1ex]
   {\rm IV'}:& \ X & \to &X + e(l) 
       \qquad\mbox{at rate}\quad \sjZi X_\bj\{\bg'_{\bj l} + \g'_{\bi l}(x)\},  
                                                       &\ 1\le l\le d; & \\[1ex]
   {\rm V}:& \ X & \to &X + e(\bi+e_l) - e(\bi) - e(l)  \\
     &&&\qquad\qquad\quad\ \ \mbox{at rate}\quad X_{l} x_\bi\s_{l\bi}(x), &\ \bi \in \zz_1,\,1 \le l \le d, & \\[1ex]
   {\rm VI}:& \ X & \to &X - e(l)   
       \qquad  \mbox{at rate}\quad X_{l}\{\bz_l + \z_l(x)\}, &\ 1\le l\le d, &
\end{array}
$$
where $x := N^{-1}X \in \{x' \!\in\! \R_+^{\zz},\,\|x'\|_1 < \infty\} =: \xxs$, and $\|x\|_1 := \sjZ x_z$.

\newpage
The transitions~I correspond to changes in the type of a patch, because of births, deaths and changes of
status involving animals within the patch, or as a result of infection or catastrophe, 
or of immigration from outside the metapopulation, and we set 
$\bl_{\bi\bi} = \l_{\bi\bi}(\cdot) = 0$, $\bi\in\zz_1$.  Then II and~III correspond to the
creation and destruction of patches,  IV and~V concern the migration of animals
of the different varieties between patches, and~VI the deaths of animals during migration.
The transitions~IV$'$ allow for the possibility of an individual being born as a migrant,
as is allowed in our first example, in Section~\ref{examples}.
More complicated transitions of this kind could have been incorporated, but the biological
motivation for doing so does not seem compelling.  
The parameters $\bl_{\bi\bj}$, $\bd_\bi$, $\bg_{\bi l}$, $\bg'_{\bi l}$ and $\bz_l$ represent fixed
rates of transition per patch. To ensure that the overall rate of jumps is finite at
any $x  \in N^{-1}\xx$, it is necessary to have
$\sjZi \bl_{\bi\bj} < \infty$ for all~$\bi\in\zz_1$.  The corresponding quantities without the bars,
together with $\s_{l\bi}(\cdot)$ and~$\b_\bi(\cdot)$, represent state
dependent components of the transition rates.  For each $x \in \xxs$,
it is then also necessary to have
\eq\label{parameter-conds}
  \sjZi \l_{\bi\bj}(x) \ <\ \infty, \quad \sjZi \b_\bj(x) \ <\ \infty \quad\mbox{and}\quad
    \sjZi x_\bi\s_{l\bi}(x) \ <\ \infty; 
\en
further assumptions are added in Section~\ref{LLN}.
In transition~IV, we require $\bg_{\bi l} = \g_{\bi l}(x) = 0$ whenever $i_l = 0$, to avoid
ever having $i_l < 0$, which would be biologically meaningless.

Let $T > 0$ be a constant; we study the evolution of the metapopulation over the interval $[0,T]$. Under further assumptions on the transition rates I--VI and on the initial condition~$x^N(0)$, 
it can be shown that, with high probability, $x^N(t)$ is uniformly close to the solution~$x$ of 
a deterministic integral equation, which is the analogue of the usual
deterministic drift differential equations found in finite dimensional problems.
In Section~\ref{LLN}, we illustrate how to use the results of [BL]~(2012a) to justify this.  
For the rest of this section, we assume that
\eq\label{LLN-approx-2}
  \pr\Bigl[\sup_{0\le t\le T}\|x^N(t) - x(t)\| _\m > \e_N\Bigr] \Le P_T(N,\e_N),
\en 
for some (small) $\e_N$ and~$P_T(N,\e_N)$, and for some norm $\|\cdot\|_\m$,
and show how~\Ref{LLN-approx-2} can be used to establish the joint behaviour of groups of individuals
in the process~$X^N$.

We begin by investigating the behaviour over time of the type of a
single patch~$\pp$.  The transitions I, IV and~V each contain elements corresponding
to the rate of change of type of a patch that is currently of type~$\bi$, with the rates
depending on the current state of the whole system, and the death rate of such
a patch is given in~III.  Thus we can single out the transition rates for the
patch~$\pp$, with its evolution only being Markovian if the current state~$x$ of the whole
system is adjoined.  For any $\bi,\bj \in \zz_1$ and $1\le l\le d$, these take the form
\eq\label{patch-rates}
\begin{array}{rlllllr}
  \bi& \to &\bj 
    &\mbox{at rate}\quad \bl_{\bi\bj} + \l_{\bi\bj}(x), &\quad  \|\bj-\bi\|_1 \ge 2; & \\[0.5ex]
  \bi& \to &\bj  
    &\mbox{at rate}\quad \bl_{\bi\bj} + \l_{\bi\bj}(x) + \bg_{\bi l} + \g_{\bi l}(x); 
            &\quad \bj = \bi - e_l & \\[0.5ex]
  \bi& \to &\bj  
    &\mbox{at rate}\quad \bl_{\bi\bj} + \l_{\bi\bj}(x) + x_{l}\s_{l\bi}(x); &\quad \bj = \bi + e_l & \\[0.5ex]
  \bi& \to &\D 
    &\mbox{at rate}\quad \bd_\bi + \d_\bi(x),  &   
\end{array}
\en
with~$\D$ a state to represent that the patch has been destroyed.
We let~$Y_N$ denote the process describing the time evolution of the type assigned to~$\pp$,
with $Y_N(t)$ taking values in $\zz_1 \cup \D$;
the $N$-dependence reflects that its transition rates are as described in~\Ref{patch-rates}, 
but with~$x^N(t)$ in place of~$x$ for the rates at time~$t$.

Analogously, we could define a process representing the life history of an animal~$\aa$ in the
metapopulation.  The migration transitions IV, V and~VI are easy to interpret, and the
destruction of a patch in~III implies the death of any animals in that patch.   The transitions~I are
more complicated.  Considering an animal of variety~$l$, its death is typically recorded in a transition
in which $j_l \le i_l-1$ (several animals of the same variety may die as a result of the same event), but a
change of developmental stage, for instance, may also result in $j_l = i_l - 1$.
Then, for unicellular animals, division is recorded most simply as $j_l = i_l + 1$, though it may be
useful to interpret the same event as the death of the original animal at the same time as the
birth of two offspring. 
Furthermore, transitions in which $i_l$ does not change may represent births of
animals that are directly associated with the particular animal of variety~$l$ being considered, 
as when an adult gives birth to juveniles that are represented as a distinct variety;  such events
are naturally to be recorded in a life history.  This suggests defining a life history 
process~$Z_N := \{(Z_{N0}(t),\ldots,Z_{Nd}(t)),\,
t \ge 0\}$ for an animal~$\aa$,
whose statespace is 
$$
   ((\zz_1 \times \{1,2,\ldots,d\}) \cup \{1,2,\ldots,d\} \cup \D) \times \Z_+^d.
$$ 
A value $Z_{N0}(t) \in \zz_1 \times \{1,2,\ldots,d\}$ denotes the 
the type of patch in which~$\aa$ is living and its current variety. Then
$Z_{N0}(t) = l$ if~$\aa$ is of variety~$l$ and in migration, 
and, if $Z_{N0}(t) = \D$, the animal~$\aa$ has died before time~$t$. 
The values $Z_{Nl}(t)$, $1\le l\le d$, record the numbers of children
of the different varieties to which~$\aa$ has given birth up to time~$t$. 
For $\bi\in \zz_1$, $l,l'\in \{1,2,\ldots,d\}$ and $m,s\in \Z_+^d$,
the transition rates can be represented in the form
\eq\label{individual-rates}
\begin{array}{rlllllr}
  ((\bi,l),m) & \to &((\bi+s,l),m+s) 
    &\mbox{at rate}\quad \bl\ui_{\bi ls} + \l\ui_{\bi ls}(x) ; & \\[0.5ex] 
  ((\bi,l),m) & \to &((\bj,l),m)   
    &\mbox{at rate}\quad \bl\ut_{\bi\bj} + \l\ut_{\bi\bj}(x) ; & \\
  ((\bi,l),m) & \to &((\bi-e_l + e_{l'},l'),m) 
    &\mbox{at rate}\quad \bl\uh_{\bi ll'} + \l\uh_{\bi ll'}(x) ; & \\[0.5ex]
  ((\bi,l),m) & \to &((\bi,l),m+e_{l'}) 
    &\mbox{at rate}\quad \bl\uf_{\bi ll'} + \l\uf_{\bi ll'}(x) ; & \\[0.5ex]
  ((\bi,l),m) & \to & (\D,m) 
    &\mbox{at rate}\quad \bd'_{\bi l} + \d'_{\bi l}(x) ; & \\[0.5ex]
  ((\bi,l),m) & \to &(l,m) 
    &\mbox{at rate}\quad i_l^{-1}\{\bg_{\bi l} + \g_{\bi l}(x)\}; & \\[0.5ex]
  (l,m)& \to &((\bi+e_l,l),m)
    &\mbox{at rate}\quad x_\bi\s_{l\bi}(x); &  \\[0.5ex]
  (l,m)& \to &(\D,m) 
    &\mbox{at rate}\quad \bz_l + \z_l(x) . &  
\end{array}
\en
Here, the quantities $\bl\ui_{\bi ls}$ and $\l\ui_{\bi ls}(x)$ represent the rates at which, 
in a type~$\bi$ patch, an
animal of variety~$l$ produces offspring in the composition~$s$, and they would form a part
of the rates $\bl_{\bi,\bi+s}$ and $\l_{\bi,\bi+s}(x)$; they are assumed not to depend
on~$m$.  Similar considerations apply to the quantities $\bl\ut_{\bi\bj}$ amd~$\l\ut_{\bi\bj}(x)$, which
relate to events changing the composition of the patch containing~$\aa$ 
that do not result in offspring for~$\aa$ or a change in its variety, including migration of
other animals from the patch or the arrival of migrants.  Thus, for instance, one might have
$\bl_{\bi,\bi+e_l} = \f_{1l} i_l$, $\bl_{\bi,\bi-e_l} = \f_{2l} i_l$, $\bg_{\bi l} = i_l\f_{3l}$
and $\s_{l\bi}(x) = \s_{l\bi}$, $1\le l\le d$,
corresponding to constant {\it per capita\/} birth, death, migration and immigration rates $\f_{1l}$, $\f_{2l}$,
$\f_{3l}$ and~$\s_{l\bi}$ of individuals of 
variety~$l$.  These would imply $\bl\ui_{\bi le_l} = \f_{1l}$, $\bl\ut_{\bi,\bi+e_l} = (i_l-1)\f_{1l}$,
$\l\ut_{\bi,\bi+e_l}(x) = x_l\s_{l\bi}$,
and $\bl\ut_{\bi,\bi-e_l} = (i_l-1)(\f_{2l} + \f_{3l})$ for transitions only involving $l$-animals,
and, for $l' \neq l$, $\bl\ut_{\bi,\bi+e_{l'}} = i_{l'}\f_{1l'}$,
$\l\ut_{\bi,\bi+e_{l'}}(x) = x_{l'}\s_{l'\bi}$,
and $\bl\ut_{\bi,\bi-e_{l'}} = i_{l'}(\f_{2l'} + \f_{3l'})$.
The transition rates $\bl\uh_{\bi ll'}$ and $\l\uh_{\bi ll'}(x)$ relate to events that change $\aa$'s variety
from $l$ to~$l'$;
it is tacitly assumed that no other changes take place when this happens, but more general possibilities
could have been allowed.  The rates $\bl\uf_{\bi ll'}$ and $\l\uf_{\bi ll'}(x)$ relate to births
of migrants as offspring of an $l$-animal.
The rates $\bd'_{\bi l} \ge \bd_\bi$ and $\d'_{\bi l}(x) \ge \d_\bi(x)$ include a contribution from the 
mortality rate of an animal of variety~$l$ in a patch of type~$\bi$, in addition to 
the rate of destruction of the patch itself. 
As for the single patch dynamics,  the rates for the process~$Z_N$ at time~$t$ are obtained by
replacing $x$ with~$x^N(t)$ in the expressions~\Ref{individual-rates}.

These constructions immediately suggest approximating the processes $Y_N$ and~$Z_N$ by
random processes $Y$ and~$Z$, in which the transition rates at time~$t$ are obtained by replacing
$x$ by~$x(t)$ in \Ref{patch-rates} and~\Ref{individual-rates}.  
Consider first the processes $Y_N$ and~$Y$.  Suppose, for some $\d > 0$, that the functions 
$\l_{\bi\bj}$, $\g_{\bi l}$, $\s_{l\bi}$ and~$\d_\bi$ are all of uniformly bounded Lipschitz $\m$-norm, 
for~$x$ in a set $B_{T,\d} := \{x \in \xx'\colon\,\inf_{0\le t\le T}\|x-x(t)\|_\m \le \d\}$ of
points close to the deterministic trajectory $(x(t),\,0\le t\le T)$.  Then,
in view of~\Ref{patch-rates}, the jump rates of $Y_N$ and~$Y$ at any 
time $t\in[0,T]$ differ only by a small amount, on the event that 
$\sup_{0\le t\le T}\|x^N(t) - x(t)\|_\m \le \e_N$, provided that~$N$
is large enough that $\e_N \le \d$.  
Indeed, defining  $f^* := \sup_{x \in B_{T,\d}}|f(x)|$ for any $f\colon\,\xx\to\re$, and setting
\[
     |Df|(x)\ :=\ \limsup_{\e\to0}\sup_{0 < \|y-x\|_\mu < \e}\{|f(y) - f(x)| / \|y-x\|_\mu\},
\]
it follows that, if $|x-x(t)| \le \e < \d$ and $0\le t\le T$, then the sum of the differences of the 
transition rates out of $x$ and~$x(t)$ is bounded by
\eqs
  &&\sup_{\bi\in\zz_1} \Blb\sum_{\bj\in\zz_1}|\l_{\bi\bj}(x) - \l_{\bi\bj}(x(t))| + 
    \slid|\g_{\bi l}(x) - \g_{\bi l}(x(t))| \right.\\
   &&\qquad\left. \mbox{} +
     \slid|x_l\s_{l\bi}(x) - x_l(t)\s_{l\bi}(x(t))| + |\d_\bi(x) - \d_\bi(x(t))|\Brb \Le \e D_Y(T,\d),
\ens
where, writing $\hs_{l\bi}(x) := x_l\s_{l\bi}(x)$, we define
\[
    D_Y(T,\d) \Def
     \sup_{\bi\in\zz_1} \Blb \sum_{\bj\in\zz_1} |D\l_{\bi\bj}|^* + \slid \{|D\g_{\bi l}|^* +  |D\hs_{l\bi}|^*\}
         + |D\d_\bi|^* \Brb .
\]
Thus, until the time at which first $\|x^N(t) - x(t)\|_\m > \e_N$,  
the aggregate difference between the jump rates of the processes $Y_N$ and~$Y$ is bounded by
$\e_N D_Y(T,\d)$, if also $t \le T$.
 This immediately leads to the following theorem. 

\begin{theorem}\label{patch-approximation}
Suppose that \Ref{LLN-approx-2} holds, and that $D_Y(T,\d) < \infty$ for some $\d > 0$.  
Then, if $Y_N(0) = Y(0)$ and $\e_N \le \d$, 
the processes $Y_N$ and~$Y$ can be constructed on the same probability space in such a way that
\[
      \pr[Y_N(t) = Y(t)\ \mbox{for all}\ 0\le t\le T]\ \ge\ 1 - \{ T \e_N D_Y(T,\d) + P_T(N,\e_N)\} .
\]
\end{theorem}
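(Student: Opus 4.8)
The plan is to construct a coupling of $Y_N$ and $Y$ on a common probability space so that they follow identical trajectories unless one of two "bad" events occurs: either the law of large numbers fails (so that $x^N$ strays far from $x$), or a jump-rate discrepancy forces the two processes apart. The two processes are time-inhomogeneous Markov jump processes on the same statespace $\zz_1 \cup \{\D\}$, with rate functions given by $(\ref{patch-rates})$ evaluated along $x^N(t)$ and $x(t)$ respectively. Since $Y_N(0) = Y(0)$, a standard maximal coupling of jump processes keeps them equal as long as their instantaneous transition rates agree; the key quantity is therefore the accumulated rate mismatch over $[0,T]$.

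First I would realise both processes as solutions of the same driving Poisson construction. Concretely, on the event $A_N := \{\sup_{0\le t\le T}\|x^N(t)-x(t)\|_\m \le \e_N\}$, and while the two processes still occupy a common state $\bi$, the total variation distance between the instantaneous jump-rate vectors out of $\bi$ in $Y_N$ and in $Y$ is, by the Lipschitz-type bound derived immediately before the theorem, at most $\e_N D_Y(T,\d)$, using $\e_N \le \d$ so that $x^N(t)$ lies in $B_{T,\d}$. I would feed both processes the same stream of potential-jump times from a dominating Poisson clock and the same uniform marks, accepting a jump for each process according to its own rates; a genuine discrepancy (one process jumps while the other does not, or they jump to different states) can be generated only through the portion of the mark space on which the two rate vectors disagree. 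The instantaneous rate of producing such a discrepancy is thus bounded by $\e_N D_Y(T,\d)$ at every time, conditionally on the processes having agreed so far and on $A_N$ holding.

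Next I would integrate this discrepancy rate. Let $\t$ be the first time the coupled processes separate. On $A_N$, the process $t \mapsto \bbone\{\t \le t\}$ is dominated by a point process whose intensity is at most $\e_N D_Y(T,\d)$, so by Markov's inequality (or directly comparing with a Poisson random variable of mean $T\e_N D_Y(T,\d)$) one obtains $\pr[\t \le T,\,A_N] \le T\e_N D_Y(T,\d)$. Combining with $\pr[A_N^c] \le P_T(N,\e_N)$ from $(\ref{LLN-approx-2})$ gives
\[
  \pr[Y_N(t) = Y(t)\ \mbox{for all}\ 0\le t\le T]
  \ \ge\ 1 - \{T\e_N D_Y(T,\d) + P_T(N,\e_N)\},
\]
which is exactly the claimed bound. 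The hypothesis $D_Y(T,\d) < \infty$ guarantees the Lipschitz bound is finite and hence that the integrated discrepancy is controlled.

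The main obstacle is making the coupling argument genuinely rigorous for a jump process with countably many possible transitions and unbounded total jump rate, rather than treating it as the finite-state case. One must ensure the dominating Poisson clock exists and that no explosion occurs on $[0,T]$: here the conditions in $(\ref{parameter-conds})$, which force the total transition rate out of each state to be finite, together with the requirement $\sjZi \bl_{\bi\bj} < \infty$, provide what is needed to define the processes as well-posed pure-jump Markov chains. A subtle point is that the rate-comparison bound preceding the theorem controls the \emph{aggregate} difference $\sum_{\bj}|\l_{\bi\bj}(x)-\l_{\bi\bj}(x(t))|$ uniformly in the starting type $\bi$; this uniformity is precisely what lets the per-step discrepancy bound $\e_N D_Y(T,\d)$ be applied regardless of which state $\bi$ the coupled pair currently occupies, so the integration step does not require tracking the trajectory's type. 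Verifying that the maximal coupling can be assembled measurably across the random switching times, and that the discrepancy-generating intensity is genuinely bounded by $\e_N D_Y(T,\d)$ uniformly, is the technical heart of the proof.
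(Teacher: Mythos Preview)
Your proposal is correct and follows essentially the same route as the paper: construct the maximal coupling of the two time-inhomogeneous jump processes, bound the instantaneous decoupling rate by $\e_N D_Y(T,\d)$ on the good event, integrate over $[0,T]$, and add the LLN failure probability.

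One small point of comparison worth noting: the paper realises the coupling not via a dominating Poisson clock and thinning, but by directly specifying the transition rates of the joint process $(Y_1,Y_2)$ on the diagonal (using $\min\{q_1,q_2\}$ for joint jumps and the positive parts for one-sided jumps), and then bounds $\pr[\{\t\le T\}\cap E_T^\h]$ via the compensator $A_t = \int_0^{t\wedge\t} Q(s,Y_1(s))\,I[E_s^\h]\,ds \le \h t$ of the one-jump indicator process. This sidesteps exactly the obstacle you flag in your last paragraph: no global dominating clock is needed, since the joint process is a well-defined pure-jump Markov chain as soon as the total rate out of each state is finite, and the compensator bound works state by state. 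Your thinning description is an equivalent way to build the same coupling when a dominating rate is available, but the paper's formulation is slightly cleaner here precisely because it does not require one.
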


\begin{proof}
Let $Y_1$ and~$Y_2$ be time-inhomogeneous Markov processes on a countable state space~$\yy$, 
with transition rates $q_1(t,y,y')$ and~$q_2(t,y,y')$ respectively.  Starting with $Y_1(0) = Y_2(0) = y_0$,
the processes can be coupled by representing them as the marginals of a joint process $((Y_1(t),Y_2(t)),\,t\ge0)$,
whose transition rates at points on the diagonal are given by
\eqs
     q(t,(y,y),(y',y')) &:=& \min\{q_1(t,y,y'),q_2(t,y,y')\};\\
    q(t,(y,y),(y,y')) &:=& \{q_2(t,y,y') - q_1(t,y,y')\}_+;\\
    q(t,(y,y),(y',y)) &:=& \{q_1(t,y,y') - q_2(t,y,y')\}_+,
\ens
and with the components evolving independently when off the diagonal.  Let $\t := \inf\{t\ge0\colon\, Y_1(t) \neq Y_2(t)\}$,
and let $E_t^\h$ denote the event $\{\hbox{$Q(s,Y_1(s)) \le \h$}\hfil\break \mbox{for all }0\le s\le t\}$, where
\[
    Q(t,y) \Def \sum_{y'\in\yy} |q_2(t,y,y') - q_1(t,y,y')|.
\]
Then the one-jump process $(I[\{\t \le t\} \cap E_t^\h],\,t\ge0)$ has compensator
\[
    A_t \Def \int_0^{t\wedge\t} Q(s,Y_1(s)) I[E_s^\h]\,ds \Le \h t.
\]
This implies that, for any $T > 0$,
\[
   \pr[\{\t \le T\} \cap E_T^\h] \Eq \ex\{I[\{\t \le T\} \cap E_T^\h]\} \Eq \ex A_T \Le \h T,
\]
from which it follows that $\pr[\t \le T] \le \h T + \pr[(E_T^\h)^c]$.  Thus this construction realizes $Y_1$ and~$Y_2$ on
the same probability space, in such a way that the two remain identical up to time~$T$ with probability at least
$1 - (\h T + \pr[(E_T^\h)^c])$.

Now, taking $Y_N$ for~$Y_1$ and $Y$ for~$Y_2$, and setting $\h = \e_N D_Y(T,\d)$, the theorem follows
from~\Ref{LLN-approx-2}.  
\end{proof}

\medskip
Since all the transitions in~\Ref{patch-rates} involve a single patch, the theorem
generalizes easily to any group of~$K$ patches.  The transition rates for the process
$(Y_N\uis,Y_N\uts,\ldots,Y_N^{[K]})$ at time~$t$ from a state $(\bi\ui,\ldots,\bi\uK)$ to one in which
$\bi\uk$ is replaced by $\bi\ukd$, with $\bi\ukd$ either of the form $\bi\uk + \bj$, $\bj\in\Z^d$, 
or~$\D$, are given by the formulae in~\Ref{patch-rates}
with $\bi\uk$ for~$\bi$, and with $x^N(t)$ for~$x$.  
The rates for a vector
of independent processes $Y^{[k]}$, $1\le k\le K$, each distributed as~$Y$, with 
$Y^{[k]}(0) = \bi\uk$, are the corresponding
rates with $x(t)$ for~$x$.  This leads to the following corollary.

\begin{corollary}\label{patch-group}
 Under the conditions of Theorem~\ref{patch-approximation}, 
\eqs
     \lefteqn{ \pr[(Y_N\uis(t),\ldots,Y_N^{[K]}(t)) = (Y\uis(t),\ldots,Y^{[K]}(t))\ \mbox{for all}\ 0\le t\le T]}\\ 
            && \ge\ 1 - \{ K T \e_N D_Y(T,\d) + P_T(N,\e_N)\} .\phantom{XXXXXXXXXXX}
\ens
\end{corollary}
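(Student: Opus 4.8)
The plan is to re-run the coupling argument from the proof of Theorem~\ref{patch-approximation}, now applied to the two $K$-patch processes in place of single patches. The general coupling lemma established there was stated for an arbitrary pair of time-inhomogeneous Markov processes $Y_1,Y_2$ on a countable state space~$\yy$, so I would simply instantiate it with $\yy = (\zz_1\cup\D)^K$, taking $Y_1 = (Y_N\uis,\ldots,Y_N^{[K]})$, whose rates are driven by $x^N(t)$, and $Y_2 = (Y\uis,\ldots,Y^{[K]})$, the vector of independent copies, whose rates are driven by $x(t)$; both are started from the common point $(\bi\ui,\ldots,\bi\uK)$. The minimum coupling realises the two vectors on a single probability space, agreeing up to the first discrepancy time $\t := \inf\{t\ge0\colon\,Y_1(t)\neq Y_2(t)\}$, and the lemma gives $\pr[\t\le T]\Le \h T + \pr[(E_T^\h)^c]$ for any $\h$ dominating the total mismatch rate $Q(t,\cdot)$ along the good event.

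The one substantive point is to identify the correct~$\h$. Since every transition listed in~\Ref{patch-rates} alters a single patch, both joint generators are sums of single-patch generators: from $(\bi\ui,\ldots,\bi\uK)$ only one coordinate changes at a time, and the rate of a change in coordinate~$k$ depends on $\bi\uk$ and on the environment alone, not on the other coordinates. Hence the aggregate discrepancy
\[
   Q(t,(\bi\ui,\ldots,\bi\uK)) \Eq \sum_{y'\in\yy}|q_2(t,(\bi\ui,\ldots,\bi\uK),y') - q_1(t,(\bi\ui,\ldots,\bi\uK),y')|
\]
splits as a sum of $K$ single-patch discrepancies, one per coordinate. Each such term is exactly the quantity bounded in the derivation preceding Theorem~\ref{patch-approximation}, namely at most $\e_N D_Y(T,\d)$ on the event $\{\sup_{0\le t\le T}\|x^N(t)-x(t)\|_\m\le\e_N\}$. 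Summing over the $K$ coordinates gives $Q(t,\cdot)\Le K\e_N D_Y(T,\d)$ on that event, so I would take $\h = K\e_N D_Y(T,\d)$.

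With this choice, $(E_T^\h)^c$ is contained in the bad event $\{\sup_{0\le t\le T}\|x^N(t)-x(t)\|_\m > \e_N\}$, whose probability is at most $P_T(N,\e_N)$ by~\Ref{LLN-approx-2}. The lemma then yields $\pr[\t\le T]\Le K T\e_N D_Y(T,\d) + P_T(N,\e_N)$; since $\{\t > T\}$ is precisely the event that the two $K$-vectors coincide throughout $[0,T]$, taking complements gives the stated bound.

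I expect no serious obstacle here: the entire content is the additivity of the mismatch rate over patches, which is forced by the single-patch structure of the transitions and is exactly what replaces the single-patch factor by the factor~$K$. The only points needing a little care are checking that each transition touches exactly one coordinate, so that there are no coincident two-patch jumps to account for, and that the single-patch discrepancy bound $\e_N D_Y(T,\d)$ holds uniformly in the current patch type, which it does because $D_Y(T,\d)$ was defined with a supremum over $\bi\in\zz_1$.
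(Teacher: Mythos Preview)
Your proposal is correct and follows exactly the approach the paper has in mind: the paper notes that ``since all the transitions in~\Ref{patch-rates} involve a single patch, the theorem generalizes easily to any group of~$K$ patches,'' identifies the joint transition rates as single-coordinate rates from~\Ref{patch-rates} with $x^N(t)$ (respectively $x(t)$) in place of~$x$, and states the corollary without further detail. You have spelled out precisely this argument --- instantiating the general coupling lemma on $(\zz_1\cup\D)^K$, using additivity of the mismatch rate over coordinates to obtain $\h = K\e_N D_Y(T,\d)$ --- so there is nothing to add.
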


\noindent
Thus the joint distribution of~$K_N$ patches is asymptotically close to that of~$K_N$ independently
evolving patches over any fixed interval $[0,T]$, as $N\to\infty$, if $K_N \e_N \to 0$, $P_T(N,\e_N)\to0$
and $D_Y(T,\d) < \infty$ for some $\d > 0$.

For the life history process of an animal, the argument
for a single individual is very similar.  We consider the differences in the transition
rates~\Ref{individual-rates} with arguments $x^N(t)$ and~$x(t)$; defining
\eqs
   D_Z(T,\d) &:=& 
     \max_{1\le l\le d}\biggl(\sup_{\bi\in\zz_1} \Bigl\{ \sum_{s\in\Z_+^d} |D\l\ui_{\bi ls}|^*  
         + \sjZi |D\l\ut_{\bi\bj}|^* + \sum_{l'=1}^d |D\l\uh_{\bi ll'}|^*   \\
     &&\qquad\qquad   \mbox{} + \sum_{l'=1}^d |D\l\uf_{\bi ll'}|^* + |D\d'_\bi|^*
         +  |D\g_{\bi l}|^*  +  |D\hs'_{l\bi}|^* \Bigr\}  + |D\z_l|^* \biggr),
\ens
where $\hs'_{l\bi}(x) := x_\bi\s_{l\bi}(x)$, this gives the following result.

\begin{theorem}\label{individual-approximation}
Suppose that \Ref{LLN-approx-2} holds, and that $D_Z(T,\d) < \infty$ for some $\d > 0$.  
Then, if $\e_N \le \d$ and $Z_N(0) = Z(0)$, 
the processes $Z_N$ and~$Z$ can be constructed on the same probability space in such a way that
\[
    \pr[Z_N(t) = Z(t)\ \mbox{for all}\ 0\le t\le T]\ \ge\ 1 - \{T \e_N D_Z(T,\d) + P_T(N,\e_N)\} .
\]
\end{theorem}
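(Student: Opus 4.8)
The plan is to run the coupling argument from the proof of Theorem~\ref{patch-approximation} essentially verbatim, so that the only genuinely new work is verifying that the aggregate jump-rate discrepancy of the life-history process is controlled by $\e_N D_Z(T,\d)$ on the good event. First I would take $Z_1 := Z_N$ and $Z_2 := Z$ as the two time-inhomogeneous Markov processes on the common countable state space $((\zz_1 \times \{1,\ldots,d\}) \cup \{1,\ldots,d\} \cup \D) \times \Z_+^d$, with $Z_1(0) = Z_2(0)$, and couple them exactly as in the proof of Theorem~\ref{patch-approximation}: move along the diagonal at the shared minimal rates and let the two coordinates evolve independently once they separate. Writing $\t := \inf\{t\ge0\colon Z_1(t)\neq Z_2(t)\}$ and $Q(t,z) := \sum_{z'}|q_2(t,z,z')-q_1(t,z,z')|$, the same compensator computation gives $\pr[\t\le T] \le \h T + \pr[(E_T^\h)^c]$ for the event $E_t^\h = \{Q(s,Z_1(s))\le\h\ \mbox{for all}\ 0\le s\le t\}$.

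The heart of the matter is to show that, on the event $\{\sup_{0\le t\le T}\|x^N(t)-x(t)\|_\m \le \e_N\}$ with $\e_N\le\d$, one has $Q(t,z)\le \e_N D_Z(T,\d)$ for every state $z$ and every $t\in[0,T]$; this is the analogue of the explicit estimate carried out before Theorem~\ref{patch-approximation}, and it splits according to the two structural types of state. For a state $((\bi,l),m)$ the state-dependent outgoing rates in~\Ref{individual-rates} are the offspring rates $\l\ui_{\bi ls}(x)$ (summed over $s\in\Z_+^d$), the composition rates $\l\ut_{\bi\bj}(x)$ (summed over $\bj$), the variety-change rates $\l\uh_{\bi ll'}(x)$ and migrant-birth rates $\l\uf_{\bi ll'}(x)$ (summed over $l'$), the death rate $\d'_{\bi l}(x)$, and the emigration rate $i_l^{-1}\g_{\bi l}(x)$; since $i_l\ge 1$ whenever variety~$l$ occupies a type-$\bi$ patch, the weight $i_l^{-1}\le 1$ drops out and that discrepancy is at most $\e_N|D\g_{\bi l}|^*$. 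Because none of these rates depends on the offspring-count coordinate $m$, applying $|f(x)-f(x(t))|\le \e_N|Df|^*$ term by term and summing reproduces exactly the inner bracket of $D_Z(T,\d)$ multiplied by $\e_N$. For a migrating state $(l,m)$ the only state-dependent rates are the settling rates $\hs'_{l\bi}(x)=x_\bi\s_{l\bi}(x)$ (summed over $\bi$) and the death-in-transit rate $\z_l(x)$, whose aggregate discrepancy is at most $\e_N(\sjZi\text{-type sum of }|D\hs'_{l\bi}|^* + |D\z_l|^*)$. Since every term arising in either case already appears, non-negatively, in the definition of $D_Z(T,\d)$, the single constant $\e_N D_Z(T,\d)$ dominates $Q(t,z)$ uniformly in $z$ and in $t\le T$.

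With this uniform estimate, I would set $\h := \e_N D_Z(T,\d)$. The event in~\Ref{LLN-approx-2} is then contained in $E_T^\h$, so $\pr[(E_T^\h)^c]\le P_T(N,\e_N)$, and the compensator inequality yields $\pr[\t\le T]\le T\e_N D_Z(T,\d) + P_T(N,\e_N)$. As $Z_N = Z$ on $\{\t > T\}$, this is precisely the stated bound.

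The step I expect to be the main obstacle is this uniform rate-difference estimate, for two reasons. One must track the $i_l^{-1}$ weighting on the emigration transition and argue that $i_l\ge1$ in every state where that transition is enabled, and one must check that the two structurally different state types $((\bi,l),m)$ and $(l,m)$ are both dominated by the \emph{same} constant; the latter succeeds only because the extra settling and migration-death terms inflate rather than violate the bound for the in-patch states. The $m$-independence of all rates, built into the model specification, is what keeps $Q(t,z)$ finite and uniformly bounded, and lets the abstract coupling of Theorem~\ref{patch-approximation} apply without change.
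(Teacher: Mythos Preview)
Your proposal is correct and follows exactly the route the paper takes: the paper does not give a separate proof of Theorem~\ref{individual-approximation} at all, but simply remarks that ``the argument for a single individual is very similar'' to that of Theorem~\ref{patch-approximation}, defines $D_Z(T,\d)$, and states the result. Your write-up is a faithful (and more explicit) execution of precisely that plan --- run the diagonal coupling of Theorem~\ref{patch-approximation} with $Y$ replaced by $Z$, bound the aggregate rate discrepancy $Q(t,z)$ by $\e_N D_Z(T,\d)$ on the good event of~\Ref{LLN-approx-2}, and conclude via the compensator inequality.
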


For the joint distribution of a group of~$K$ animals, asymptotic independence is not quite
as straightforward, since all but the fourth and the last transitions in~\Ref{individual-rates} simultaneously
change the state of any other animal in the same patch.  Hence it is necessary to begin
with all animals in different patches, and the simple coupling breaks down once two of them
are to be found in the same patch.  This can only occur when a migrant enters a patch that
already contains another of the~$K$ animals.  For a given animal of variety~$l$, 
an upper bound for the maximum rate at which it can enter such a patch
is $N^{-1}(K-1) \sup_\bi |\s_{l\bi}|^*$, because the $(K-1)$ other
animals of the group can be in at most $K-1$ distinct patches, and $\s_{l\bi}(x) \le |\s_{l\bi}|^*$; 
and there are~$K$ animals that could migrate into such a patch.  Hence the event that no two of 
the~$K$ animals are
in the same patch during the interval $[0,T]$ has probability bounded by $K^2 N^{-1} \s^+$,
where $\s^+ := \sup_{\bi\in\zz_1} \max_{1\le l\le d} |\s_{l\bi}|^*$. This leads to
the following corollary.

\begin{corollary}\label{individual-group}
Suppose that \Ref{LLN-approx-2} holds, and that $D_Z(T,\d) < \infty$ for some $\d > 0$. 
Then, if  $\e_N \le \d$ and the~$K$ individuals are initially all in distinct patches, we have
\eqs
     \lefteqn{ \pr[(Z_N\uis(t),\ldots,Z_N^{[K]}(t)) = (Z\uis(t),\ldots,Z^{[K]}(t))\ \mbox{for all}\ 0\le t\le T]}\\ 
            && \ge\ 1 - \{ K T \e_N D_Z(T,\d) + T K^2 N^{-1} \s^+ + P_T(N,\e_N)\} ,
\ens
where the $Z^{[k]}$, $1\le k\le K$, are independent copies of~$Z$ with $Z^{[k]}(0) = Z_N^{[k]}(0)$.
\end{corollary}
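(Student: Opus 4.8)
The plan is to run a product coupling of the joint process $(Z_N\uis,\ldots,Z_N^{[K]})$ against the vector of independent copies $(Z\uis,\ldots,Z^{[K]})$, mirroring the single-animal construction behind Theorem~\ref{individual-approximation}, but only for as long as the $K$ tagged animals occupy distinct patches. The key observation is that, while no two tagged animals share a patch, every transition in~\Ref{individual-rates} that affects $Z_N^{[k]}$ involves only the patch currently containing the $k$-th animal and leaves the other tagged animals' states unchanged; hence the joint transition rates of the $Z_N$-system factorize into a sum of per-animal contributions, exactly as the $K$ patch processes did in Corollary~\ref{patch-group}. I would therefore couple each pair $(Z_N^{[k]},Z^{[k]})$ by the minimal (diagonal) coupling used in the proof of Theorem~\ref{patch-approximation}, independently across $k$, for as long as the animals stay separated.

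I would then isolate three failure events and bound the break-down probability on $[0,T]$ by their union: (i) the law of large numbers fails, i.e.\ $\sup_{0\le t\le T}\|x^N(t)-x(t)\|_\m > \e_N$, which by~\Ref{LLN-approx-2} has probability at most $P_T(N,\e_N)$; (ii) a discrepancy event fires in the product coupling while the animals are separated and the LLN holds; and (iii) two tagged animals come to occupy the same patch. For~(ii), on the event that the LLN holds, the total variation between the on-diagonal joint rates of the two $K$-systems is at most the sum of the $K$ per-animal discrepancy rates, each bounded by $\e_N D_Z(T,\d)$ precisely as in the discussion preceding Theorem~\ref{individual-approximation}; thus the aggregate discrepancy rate is at most $K\e_N D_Z(T,\d)$. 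Setting $\h=K\e_N D_Z(T,\d)$ and repeating verbatim the compensator argument of Theorem~\ref{patch-approximation}, now with the coupling additionally stopped at the first collision time, shows that event~(ii) contributes at most $KT\e_N D_Z(T,\d)$.

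The genuinely new step is the bound on the collision probability~(iii). Two tagged animals can only come to share a patch through a migration transition $(l,m)\to((\bi+e_l,l),m)$ in which one tagged migrant enters the patch already containing another tagged animal. The rate at which the tagged $l$-migrant enters any \emph{particular} patch of type~$\bi$ is $N^{-1}\s_{l\bi}(x)$, since the rate $x_\bi\s_{l\bi}(x)=N^{-1}X_\bi\s_{l\bi}(x)$ into type-$\bi$ patches is shared among the $X_\bi$ physical patches of that type, and $\s_{l\bi}(x)\le|\s_{l\bi}|^*\le\s^+$ on $B_{T,\d}$. As the other $K-1$ tagged animals lie in at most $K-1$ patches and any of the $K$ animals could be the migrant, the total rate of a collision-creating event is at most $N^{-1}K(K-1)\s^+\le N^{-1}K^2\s^+$. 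Integrating over $[0,T]$, the expected number of such events by time~$T$ is at most $TK^2N^{-1}\s^+$, so the probability of~(iii) is bounded by the same quantity. Combining the three bounds by a union bound yields the stated inequality.

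I would flag~(iii) as the main obstacle. Everything else is a near-mechanical lifting of the single-animal argument, whereas here one must correctly recognize that the rate of entry into a \emph{named} patch carries the extra factor $N^{-1}$ (rather than being $x_\bi\s_{l\bi}(x)$), and that the minimal coupling is legitimate only while the animals are separated, so that the collision event has to be added as an explicit failure mode rather than being absorbed into the compensator calculation.
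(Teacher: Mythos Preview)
Your proposal is correct and follows essentially the same approach as the paper: the paper likewise observes that the simple product coupling is valid only while the~$K$ animals occupy distinct patches, bounds the rate at which a tagged migrant enters a patch already containing another tagged animal by $N^{-1}(K-1)\sup_\bi|\s_{l\bi}|^*$ per animal and hence the collision probability by $TK^2N^{-1}\s^+$, and then adds this to the $K$-fold single-animal discrepancy bound $KT\e_N D_Z(T,\d)+P_T(N,\e_N)$ obtained exactly as in Corollary~\ref{patch-group}. Your write-up is in fact somewhat more explicit than the paper's sketch about stopping the compensator at the collision time and about the $N^{-1}$ arising from sharing the rate $x_\bi\s_{l\bi}(x)$ among the $X_\bi$ physical patches.
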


\noindent
Thus, if \Ref{LLN-approx-2} holds and $D_Z(T,\d) < \infty$ for some $\d > 0$, any group of~$K_N$ 
animals that are initially 
in different patches behaves asymptotically as a group of independent individuals, under 
the same asymptotic scenario as before, if also $N^{-1}K_N^2 \to 0$ as $N \to \infty$.

The model in Arrigoni~(2003) does not conform to our general prescription, because
migration is assumed to take place instantaneously, rather than by way of an intermediate
migration state.  However, the state dependent elements of its transition rates are locally
uniformly Lipschitz, and~\Ref{LLN-approx-2} holds, so that analogous theorems hold for this
model as well.  We do not include instantaneous migration in our general formulation, partly because
it seems unrealistic, but mainly because, for the methods in [BL]~(2012a) to be applied, 
only rather restrictive choices can be allowed for
the migration transitions.  For instance, in the Arrigoni model, it is important
that the migration rate $\bg_{i}$ out of patches with $i$ individuals is given by $\bg_{i} = \g i$; variants in which $i^{-1}\bg_i$ increases with~$i$ would not 
lead to a locally Lipschitz drift~$F$ in~\Ref{F-split} below.

\section{Establishing the law of large numbers}\label{LLN}
\setcounter{equation}{0}
We now need to prove that~\Ref{LLN-approx-2} holds.  For this, we need to find 
conditions on the transition rates in I--VI that allow us to
apply the results of [BL]~(2012a) to the process~$X^N$.  First,
we need to make some small modifications
to the setting in the previous section.  We start by augmenting the type space~$\zz$
to~$\tzz$,
by substituting $\tzz_2 := \{1,2,\ldots,d\} \times \{0,1\}$ for~$\zz_2$, where the
type~$(l,1)$ replaces the previous type~$l$ in representing an individual of variety~$l$
in migration, and type~$(l,0)$ is to be thought of as an unused place available for a
migrant of variety~$l$.  Then, in transitions IV and IV$'$, $e(l)$ is replaced by $e(l,1) - e(l,0)$ and, in
transitions V and~VI, $-e(l)$ is replaced by $e(l,0) - e(l,1)$ and $X_l$ by $X_{l1}$.  
The number $X_{l0}$ of patches of type $e(l,0)$ can be deduced from the number~$X_{l1}$ 
of $e(l,1)$ patches, since the sum $X_{l1} + X_{l0}$ remains constant in all transitions,
and is therefore always the same as its initial value.  However, to prevent the number
of type~$(l,0)$ patches becoming negative, the process~$X^N$ has to be stopped at the 
time~$\t_{0,N} := \inf\{t\ge0\colon\, \min_{1\le l\le d} X_{l0}^N = 0\}$.
So that this has little effect on the process, $X^N(0)$ is chosen with $X^N_{l0} \ge Nh_l$,
$1\le l\le d$, with the~$h_l$ so large that, for fixed~$T$, the event $\{\t_{0,N} \le T\}$ has 
asymptotically small probability as $N\to\infty$.
The reason for introducing the empty migration patches will emerge shortly.

\subsection{A priori bounds}\label{moments-sect}
We now introduce a measure~$\nu$ of the size of a patch, defining $\n(l,0) = \n(l,1) := 1$
for $1\le l\le d$, and $\n(\bi) := \|\bi\|_1+1$, one more than the number 
of individuals in a type~$\bi$ patch.  More flexible choices for~$\nu$ are allowed in [BL]~(2012a), but
this suffices here.  It is then necessary to make assumptions ensuring that,
for enough values of $r\in\Z_+$, the empirical moments $S_r(x^N(t)):= \sjtZ \n(z)^rx^N_z(t)$ 
remain bounded with high probability as~$N$ increases, if they are initially bounded. 
Let~$J$ denote a finite linear combination of coordinate 
vectors in~$\tzz$. 
Let~$\jj$ denote the jumps~$J$ that appear in the transitions I--VI, with the above modification
replacing~$e(l)$ by $e(l,1) - e(l,0)$, and let the associated
transition rates be denoted by $N\a_J(x)$.  Note that we can suppose that $x \in \xxs$,
if the~$l$ coordinates in~$\zz$ are identified with the $(l,1)$ coordinates in~$\tzz$,
since the values $x_{(l,0)}$ do not appear in the expressions for the transition rates I--VI.
For $J := \sum_{k=1}^K a_k e(\bj\uk) \in \jj$, write
\eq\label{nu(J)}
    \n_r^+(J) \Def \sum_{k=1}^K a_k \{\n(\bj\uk)\}^r,
\en
and, for $r\in\Z_+$, define
\eq\label{moments}
  U_r(x) \Def \sjj \a_J(x)\n_r^+(J); \quad V_r(x) \Def \sjj \a_J(x)\{\n_r^+(J)\}^2.  
\en
Then, in order to be able to apply the theorems of [BL]~(2012a), we assume that, 
for some $r\ui \ge 1$ and for all $0 \le r \le r\ui$,
\eq\label{assn1}
    \sjj \a_J(N^{-1}X)|\n_r^+(J)| \ <\ \infty \ \mbox{for each}\ X \in \xx,
\en
and that, for suitable constants $k_{rl}$ and all $x\in\xxs$,
\eq\label{assn2}
  \begin{aligned}
      U_0(x) &\Le k_{01}S_0(x) + k_{04}; \\
      U_1(x) &\Le k_{11}S_1(x) + k_{14}; \\
      U_r(x) &\Le \{k_{r1} + k_{r2}S_0(x)\}S_r(x) + k_{r4}, \qquad2\le r\le r\ui,
   \end{aligned} 
\en
and, for some $r\ut \ge 1$,
\eq\label{assn3}
  \begin{aligned}
      V_0(x) &\Le k_{03}S_1(x) + k_{05}; \\
      V_r(x) &\Le k_{r3}S_{p(r)}(x) + k_{r5},\qquad 1\le r \le r\ut,
   \end{aligned} 
\en
are satisfied, where $1 \le p(r) \le r\ui$ for $1\le r\le r\ut$.

In our setting, satisfying the condition~\Ref{assn1} is straightforward except
for the transitions of the form~II, since, for $X\in\xx$, only finitely many
of the~$X_\bi$ are non-zero; and transitions of the form~II are also the only ones that
make positive contributions to~$U_0(x)$.
One plausible assumption, covering these and later conditions, is to require that
\eq\label{beta-cond}
    \b_\bj(x) \le c'_\bj(\|x\|_1 + 1), \quad\mbox{where}\quad 
               \sjZi c'_\bj \{\n(\bj)\}^r < \infty\ \ \mbox{for each}\ r\in\Z_+.
\en
Here, and in what follows, $c$ and~$c'$ are used to denote generic constants.
If the types $(l,0)$ had not been introduced, there
would also be positive contributions of $\sjZi X_\bj\bg_{\bj l}$ to $U_0(x)$ from transitions~IV, and
the most natural assumption for the value of~$\bg_{\bj l}$ is $\g_l j_l$, for some
constant~$\g_l$, corresponding to a constant {\it per capita\/} migration rate for
$l$-individuals.
Thus $\sjZi X_\bj\bg_{\bj l}$ would be bounded by a multiple of $S_1(x)$,
rather than by a multiple of~$S_0(x)$, and so would not have come within the scope
of [BL]~(2012a).  For the remaining conditions concerning $U_r(x)$, $r\ge1$, it is
enough to assume that, for $\bi\in\zz_1$ and for all $x\in\xxs$,
\eqa
   \sjZi \bl_{\bi\bj} + 
   \sjZi (\bl_{\bi\bj} + \l_{\bi\bj}(x))\{\n(\bj) - \n(\bi)\}_+ &\le& c\n(\bi); \label{lambda-cond-1a} \\
   \sjZi (\bl_{\bi\bj} + \l_{\bi\bj}(x))(\{\n(\bj)\}^r - \{\n(\bi)\}^r)_+ &\le& c\{\n(\bi)\}^r(\|x\|_1+1), 
           \label{lambda-cond-1b}
\ena
and that, for $1\le l\le d$ and for all $x\in\xxs$,
\eq\label{sigma-cond}
    \s_{l\bi}(x) \Le c;\quad \sjZi x_\bj \s_{l\bj}(x) \Le c.
\en
For the conditions concerning $V_r(x)$, $r\ge0$, with $p(r) = 2r+1$ as in [BL]~(2012), we assume further that,
for $\bi\in\zz_1$ and $1\le l\le d$ and for all $x\in\xxs$,
\eq\label{delta-gamma-cond}
   \bd_\bi + \d_\bi(x) \Le c\n(\bi),\quad\bg_{\bj l} + \g_{\bi l}(x)  \Le c\n(\bi)
             \quad\mbox{and}\quad \bg'_{\bi l} + \g'_{\bi l}(x) \Le c\n(\bi), 
\en
and that
\eq\label{lambda-cond-2}
   \sjZi (\bl_{\bi\bj} + \l_{\bi\bj}(x))(\{\n(\bj)\}^r - \{\n(\bi)\}^r)^2 \Le c\{\n(\bi)\}^{2r+1}.
\en

\subsection{The deterministic equation}\label{DE-sect}
The process $x^N := N^{-1}X^N$ has infinitesimal drift~$F_0(x)$, $x\in\xxs$, whose components
are formally given by
\eqa
   F_{0;\bi}(x) &:=& \sjZi x_\bj\{\bl_{\bj\bi} + \l_{\bj\bi}(x)\} - x_\bi\sjZi\{\bl_{\bi\bj} + \l_{\bi\bj}(x)\}
       - x_\bi\{\bd_{\bi} + \d_{\bi}(x)\} \non\\
    &&\mbox{}\   + \b_\bi(x) + \slid x_{\bi+e_l}\{\bg_{\bi+e_l,l} + \g_{\bi+e_l,l}(x)\}
         - x_\bi \slid \{\bg_{\bi l} + \g_{\bi l}(x)\} \non\\
    &&\mbox{}\qquad + \slid x_{l1}\{x_{\bi-e_l}\s_{l,i-e_l}(x) - x_\bi\s_{l\bi}(x)\}, \label{F0-def-1}
\ena
\nin for $\bi \in \zz_1$, and, for $1\le l\le d$,
\eqa
  F_{0;l1}(x)  &:=& \sjZi x_\bj\{\bg_{\bj l} + \g_{\bj l}(x) + \bg'_{\bj l} + \g'_{\bj l}(x)\} \non\\
    &&\qquad\mbox{} - x_{l1} \sjZi x_\bj\s_{l\bj}(x)
      - x_{l1}\{\bz_l + \z_l(x)\};\label{Fhat-def-2}
\ena
these expressions only make sense if the $\bj$-sums are all finite.
The drift in the $(l,0)$ coordinate is given by $-F_{0;l1}(x)$, but we do not use
it explicitly.  Thus, for $x \in \xxs$ such that $F(x)$ exists, we can write
\eq\label{F-split}
   F_0(x) \Def Ax + F(x),
\en
to be interpreted as an element of $\re_+^{\zz}$, where
\eq
  \begin{aligned}
   A_{\bi\bj} \,&:=\, \bl_{\bj\bi} + \slid \bone_{\{\bj=\bi+e_l\}}\bg_{\bj l},\quad \bi\ne \bj \in\zz_1; \\[-1ex]
   A_{\bi\bi} \,&:=\, - \sjZi \bl_{\bi\bj} - \bd_\bi - \slid \bg_{\bi l},\quad \bi\in\zz_1;
            \\
   A_{\bi l} \,&:=\, 0, \  A_{l\bi} \,:=\, \bg_{\bi l} + \bg'_{\bi l}, \  A_{ll}  \,:=\, -\bz_l, 
        \ A_{ll'}  \,:=\, 0, \quad \bi\in\zz_1,\,1\le l,l'\le d,
   \end{aligned}\label{A-def-3}
\en
with~$l$ in the indices of~$A$ as shorthand for $(l,1)$;
and where
\eqa
  \lefteqn{F_{\bi}(x) \Def \sjZi x_\bj\l_{\bj\bi}(x) - x_\bi\sjZi \l_{\bi\bj}(x)
       + \b_\bi(x)  - x_\bi \d_{\bi}(x) + \slid x_{\bi+e_l} \g_{\bi+e_l,l}(x)} \non\\
  &&\mbox{}  \qquad\quad- x_\bi \slid \g_{\bi l}(x) 
           + \slid x_{l1}\{x_{\bi-e_l}\s_{l,\bi-e_l}(x) - x_\bi\s_{l\bi}(x)\},\phantom{XXXX}
                 \phantom{XX} \label{F-def-1}
\ena
for $\bi\in\zz_1$, and, for $1\le l\le d$,
\eqa
  F_{l1}(x) &:=& \sjZi x_\bj\{\g_{\bj l}(x) + \g'_{\bj l}(x)\} - x_{l1} \sjZi x_\bj\s_{l\bj}(x)
      - x_{l1} \z_l(x).  \phantom{XX}  \label{F-def-2}
\ena
The reason for splitting the drift as above is to treat models in which the
transition rates are not bounded as~$\n(\bi)$ increases --- migration, birth
and death rates proportional to the numbers of individuals in a patch are very 
natural --- enabling the theory of perturbed linear operators to be applied.  

We first assume that there is a real $\m\in[1,\infty)^\zz$ such that, for some $w\ge0$,
\eq\label{A-cond}
    A^T \m \Le w\m,
\en 
and use it to define the $\m$-norm 
\eq\label{norm-def}
   \|x\|_\m \Def \sjZ \m(z)|x_z| \quad\mbox{on}\quad \xxs_\m := \{x \in \re^\zz\colon\, \|x\|_\m < \infty\},
\en
with $x_l$ identified with~$x_{l1}$ as before.
Note that, if~\Ref{A-cond} is assumed, we must have $\sjZ \bl_{\bi z}\m(z) < \infty$ for each~$\bi$.
Then, as in [BL]~(2012a, Theorem~3.1), there exists a $\m$-strongly continuous semigroup $\{R(t),\,t\ge0\}$
with elementwise derivative $R'(0)=A$.
Furthermore, if~$F:\xxs_\m\to\xxs_\m$ is locally $\m$-Lipschitz and $\|x(0)\|_\m < \infty$, the integral equation 
\eq\label{deterministic}
    x(t) \Eq R(t)x(0) + \int_0^t R(t-u)F(x(u))\,du
\en
has a unique, $\m$-continuous solution on $[0,T]$ for any $0 < T < \tmax$, 
for some $\tmax \le \infty$.  This~$x$ is the deterministic
curve that approximates~$x^N(t)$ when $x^N(0)$ is $\m$-close enough to~$x(0)$. 

From now on, we take $\m(\bj) := \|\bj\|_1 + 1$ for $\bj\in\zz_1$ and $\m(l) := 1$ for $1\le l\le d$.
Inequality~\Ref{A-cond} is then satisfied if
\eq\label{A-mu-cond}
     \sjZi \bl_{\bi\bj}(\m(\bj) - \m(\bi)) +  \slid (\bg_{\bi-e_l,l} - \bg_{\bi l})\m(\bi-e_l)
      + \slid \bg'_{\bi l} \Le w\m(\bi)
\en
for all $\bi\in\zz_1$.  In order then to deduce that~$F\colon \xxs_\m \to \xxs_\m$  
is locally $\m$-Lipschitz, 
sufficient conditions are that, for $1\le l\le d$ and $\bi \in \zz_1$, and for any $R > 0$,
\eq
  \begin{aligned}
   &\s_{\bi l}(x), \d_\bi(x), \g_{\bi l}(x), \g'_{\bi l}(x), \z_l(x)\ \mbox{and}\ \sjZi \l_{\bi\bj}(x)\ 
         \mbox{are uniformly bounded, and}\\
   & \d_\bi(x), \g_{\bi l}(x), \g'_{\bi l}(x), \s_{\bi l}(x)\ \mbox{and}\ \z_l(x)\ \mbox{are $\m$-uniformly Lipschitz, in}\
      x \in B_R; \\[1ex]  
   &\sjZi |\l_{\bi\bj}(x) - \l_{\bi\bj}(y)| 
           \Le c\|x-y\|_\m, \quad \sjZi |\b_\bj(x)-\b_\bj(y)|\m(\bj) \Le c\|x-y\|_\m, \\              
   &\sjZi |\l_{\bi\bj}(x) - \l_{\bi\bj}(y)|\m(\bj) \Le c\m(\bi)\|x-y\|_\m 
              \ \ \mbox{and}\ \ \sjZi \l_{\bi\bj}(x)\m(\bj) \Le c\m(\bi),\\
   &\qquad  \mbox{uniformly in}\          x,y \in B_R, 
    \end{aligned}\label{Lip-conds} 
\en
for suitable constants $c = c_R$, where $B_R$ is the ball of radius~$R$ in~$\xxs_\m$.

\subsection{The law of large numbers approximation}\label{jumps-sect}
In order to apply the results of [BL]~(2012a), we still need to check that their
Assumption~4.2 is satisfied.  Part~(1) is satisfied with $r_\mu=1$, because
$\mu(z) = \nu(z)$ for all $z\in\tzz$. For Part~(2), we define 
$\z(\bi) := (\|\bi\|_1+1)^{2d+5}$ for $\bi\in\zz_1$ and $\z(l,1) := \z(l,0) = 1$
for $1\le l\le d$, and observe that then, using
conditions \Ref{lambda-cond-1a} and~\Ref{delta-gamma-cond}, the sum
\[
     Z \Def \sjZi \frac{\mu(\bj)(A_{\bj\bj}+1)}{\sqrt{\z(\bj)}} 
       \Eq O\Bigl(\sum_{j\ge0} j^{(d-1) + 2 - (d+5/2)}\Bigr) \ <\ \infty.
\]
This implies that [BL]~(2012a, Assumption~4.2(2)) is satisfied, provided that~$\z$
satisfies [BL]~(2012a, Assumption~(2.25)).  Defining
$f(J) := \sum_{k=1}^K |a_k| \z(\bj\uk)$ when $J :=  \sum_{k=1}^K a_k \bj\uk$,
this in turn requires that
\eq\label{assn-4.2}
     \sjj \a_J(x) f(J) \Le \{k_1 S_r(x) + k_2\},\quad x\in\xxs,
\en
for some constants $k_1$ and~$k_2$ and for some $r \le r\ut$.
However, this also follows from conditions \Ref{lambda-cond-1a}--\Ref{delta-gamma-cond},
if $r = 2d+6$.  Hence it is necessary to have $r\ut \ge 2d+6$ in~\Ref{assn3}
and thus $r\ui \ge 4d+13$ in~\Ref{assn2}.

Suppose now that the assumptions  \Ref{beta-cond}--\Ref{lambda-cond-2} of Section~\ref{moments-sect},
and \Ref{A-cond}, \Ref{A-mu-cond} and~\Ref{Lip-conds} of Section~\ref{DE-sect}, 
are all satisfied.  Then
it follows from [BL]~(2012a, Theorem~4.7) that, for a sequence of initial conditions satisfying
\eq\label{IC-1}
  x_N(0) \in \xxs, \ N\ge1;\quad  S_{2d+6}(x_N(0)) \le C_*\ \mbox{for some}\ C_* < \infty, 
\en
and
\eq\label{IC-2}
   \|x_N(0) - x(0)\|_\m \Eq O(N^{-1/2}\sqrt{\log N})\quad \mbox{for some}\ x(0) \in \xxs_\m,
\en
the deterministic approximation~\Ref{LLN-approx-2} holds for any~$T$, with
\[
   \e_N \Eq k_T N^{-1/2}\sqrt{\log N}\quad\mbox{and}\quad P_T(N,\e_N) \Eq k_T' N^{-1}\log N,
\]
for suitably chosen constants $k_T$ and~$k_T'$.  Note that
equation~\Ref{deterministic} remains the same, whatever the values~$h_l$, $1\le l\le d$, chosen
as lower bounds for~$x^N_{l0}$.  Hence, in view of this approximation, it follows that
the event $\{\t_{0,N} \le T\}$ has probability at most $P_T(N,\e_N)$ if the~$h_l$ are chosen to satisfy 
$h_l \ge \sup_{0\le t\le T}x_{l1} + \d$ for each~$l$, for some $\d>0$, whenever~$N$ is so 
large that $\e_N < \d$.
Thus, under the above conditions on the rates for the transitions I--VI,
the results of Section~\ref{main} all hold, with the above values of $\e_N$ and $P_T(N,\e_N)$.
In particular, groups of patches or of animals of sizes $K_N = O(N^\a)$, for any $\a < 1/2$, behave
asymptotically independently.

\begin{remark}
{\rm The assumptions concerning the transition rates are rather general, and cover many
biologically useful models.  They can be extended somewhat, as far as the permissible
variation with~$x$ is concerned, by noting that the inequality~\Ref{assn3}, for $r\ge1$,
could be replaced by 
$$
    V_r(x) \le k_{r3}S_{p(r)}(x)(1+S_0(x)) + k_{r5}; 
$$
this would require only minor modification to the proof of [BL]~(2012a, Theorem~2.4).  
For our purposes, the bounds in \Ref{delta-gamma-cond} and~\Ref{lambda-cond-2} could 
then be relaxed by multiplying their right hand sides by a factor $(\|x\|_1 + 1)$. 
However, it is not obvious that the inequality
in~\Ref{lambda-cond-1a} can be relaxed in this way, and this restricts the freedom
for~$\l_{\bi\bj}(x)$ to vary with~$x$.}
\end{remark}

\section{Examples}\label{examples}
\setcounter{equation}{0}
\subsection{Example 1: The finite patch size models of Metz \& Gyllenberg~(2001)}  
The first model, with~$N$ patches and just one variety of animal, has transitions of the form I--VI,
with index set $\Z_+ \cup \{D\}$, where~$D$ is used here as index for the migrants
(Metz \& Gyllenberg use~$D$ to denote our~$x_D$). In their notation, in a patch with~$i$ occupants, 
the birth rate is $\bl_{i,i+1} := i\l_i(1-d_i)$, the death rate $\bl_{i,i-1} := i\m_i$,
the catastrophe rate $\bl_{i,0} := \g_i$ and the birth rate of (juvenile) migrants $\bg'_{iD} := i\l_id_i$; 
here, $0\le d_i\le 1$ for all~$i$.
The arrival rate of a migrant into an $i$-patch is~$\s_{Di}(x) := \a s_i$, where $0 \le s_i \le 1$ for all $i$, and the death rate
of a migrant is~$\z_D := \m_D$.
All other transition rates are zero; in particular, there is none of the explicit 
dependence on~$x$ that would be allowed in our formulation, for functions such 
as~$\l_{ij}(x)$.  

We take $\n(i) = \m(i) = i+1$, $i\in\Z_+$, and $\n(D) = \m(D) = 1$.  Then
assumption~\Ref{beta-cond} is trivially satisfied, and \Ref{lambda-cond-1a} and~\Ref{lambda-cond-1b}
require $\l_i$ to be bounded (so, as is reasonable, the 
{\it per capita\/} birth rate of an animal is to be bounded), in which case~\Ref{delta-gamma-cond}
is also satisfied.
For~\Ref{sigma-cond}, we require $s_i$ to be bounded, which is satisfied since $s_i$ are assumed to be probabilities.
Condition~\Ref{lambda-cond-2} also involves $\g_i$ and~$\m_i$, and is satisfied if, in
addition, $\m_i$ and $i^{-1}\g_i$ are bounded in $i\ge1$.  The conditions~\Ref{Lip-conds} are 
trivially satisfied, and~\Ref{A-mu-cond} is satisfied for
\[
   w \Def \sup_{i\ge1}\{\l_i(1-d_i) - \m_i - i^{-1}\g_i + ((i-1)\l_{i-1}d_{i-1} - i\l_id_i)\},
\]
finite if also $u_i := (i-1)\l_{i-1}d_{i-1} - i\l_id_i$ is bounded above in $i\ge1$.
The quantity~$u_i$ is the amount by which the total migration from a patch declines, when
the number of individuals in the patch increases from $i-1$ to~$i$, and for this to be
bounded is again an entirely reasonable hypothesis.  Finally, the quantities $D_Y(T,\d)$
and~$D_Z(T,\d)$ are bounded, since the~$s_i$ are bounded.
Hence, assuming that
\eq\label{MG-assn-1}
      \l_i, \m_i, i^{-1}\g_i, \ \mbox{and}~u_i\ \mbox{are bounded}, 
\en
our theorems apply to the initial model of Metz \& Gyllenberg~(2001), for initial
conditions~$x^N(0)$ satisfying \Ref{IC-1} and~\Ref{IC-2}.  
As it happens, the authors restricted their
model by imposing a maximal number of animals per patch `to make life easy', so that~\Ref{MG-assn-1} 
is trivially satisfied in their context; but such a restriction is unnatural, and we have shown that it
can be replaced by~\Ref{MG-assn-1}.   Metz \& Gyllenberg use the deterministic 
approximation~$x := \{x(t),\,t\ge0\}$ as the basis for
their analysis, and this is justified over any fixed finite time 
interval $[0,T]$ by the discussion in Section~\ref{LLN}, provided that~$N$
is large enough.  

The results of Section~\ref{main} now show, in addition, that small groups of individuals
behave almost independently of each other, according to time inhomogeneous Markov
jump processes whose transition rates are determined by~$x$.  For a chosen patch~$\pp$, the
Markov process has transition rates at time~$t$ given by
\eq\label{patch-rates-MG}
\begin{array}{rlllllr}
  i& \to &i+1 
    &\mbox{at rate}\quad i\l_i(1-d_i) + x_D(t)\a s_i, &\quad i \ge 0; & \\[0.5ex]
  i& \to &i - 1 
    &\mbox{at rate}\quad i\m_i, &\quad i \ge 2; & \\[0.5ex]
  i& \to &0 
    &\mbox{at rate}\quad \g_i + \m_1 \bone_{\{1\}}(i), &\quad i \ge 1.  
\end{array}
\en
Any particular animal~$\aa$ is born either as a migrant, or in a patch.  
Once in a patch, it never migrates again.  Its Markov process has transition rates at time~$t$ given by
\eq\label{individual-rates-MG}
\begin{array}{rlllllr}
  (i,m) & \to &(i+1,m+1) 
    &\mbox{at rate}\quad \l_i(1-d_i) ; &i\ge1 \\[0.5ex]
  (i,m) & \to &(i+1,m) 
    &\mbox{at rate}\quad (i-1)\l_i(1-d_i) + x_D(t)\a s_i ; &i\ge2 \\[0.5ex]
  (i,m) & \to &(i-1,m) 
    &\mbox{at rate}\quad (i-1)\m_i ; &i\ge2 \\[0.5ex]
  (i,m) & \to &(i,m+1) 
    &\mbox{at rate}\quad \l_i d_i ; &i\ge1 \\[0.5ex]
  (i,m) & \to &(\D,m) 
    &\mbox{at rate}\quad \m_i + \g_i ; &i\ge1 \\[0.5ex]
  (D,0)& \to &(i,0)
    &\mbox{at rate}\quad \a x_{i-1}(t)s_{i-1}; &i\ge1  \\[0.5ex]
  (D,0)& \to &(\D,0) 
    &\mbox{at rate}\quad \m_D . &  
\end{array}
\en
In either case, the process depends on~$x(t)$ only through the arrival rates of
migrants into patches.

The second model of Metz \& Gyllenberg~(2001) has animals of two different varieties,
that interact through living in common patches, in that their {\it per capita\/} birth and death
rates $\l$ and~$\m$ and their migration parameters $d$ and~$s$ vary with the
entire composition $(i_1,i_2)$ of the populations of the two varieties in a patch.
Under assumptions analogous to~\Ref{MG-assn-1}, the deterministic process~$\{x(t),\,t\ge0\}$
with index set $Z_+^2 \cup \{D_1,D_2\}$
again acts as a good approximation to the random process~$x^N$, and small groups of individuals
and patches behave asymptotically almost independently. 
Sufficient conditions for this are bounded {\it per capita\/} birth, death, catastrophe and 
migrant arrival rates, together with $u_{i_1,i_2}$ being bounded in $i_1,i_2\ge0$, where
\[
    u_{i,j} \Def (i-1)\l_{i-1,j}d_{i-1,j} - i\l_{ij}d_{ij} 
                     + (j-1)\l_{i,j-1}^*d_{i,j-1}^* - j\l_{ij}^*d_{ij}^*;
\]
here, the starred quantities are those for the second variety, and the unstarred those for the
first. 

However,
Metz \& Gyllenberg are interested in using the approximation when just a small
number of animals of the second variety have been introduced into a resident
metapopulation consisting only of the first variety.  Under such circumstances,
the development of the introduced variety has an essentially random component
--- it may die out by chance, even if at a theoretical advantage --- making it 
more reasonable to treat it as a small group of individuals, of a different
variety, evolving at random among a resident population.  The following discussion
represents a theoretical justification for the analysis in  Metz \& Gyllenberg (2001, Section~2(d)).

We begin by choosing $x^N(0) = \tx^N(0) + N^{-1}K_Ne_{D_2}$, where~$\tx^N(0)$ is
an initial composition consisting only of individuals of the first variety,
and $\|\tx^N(0) - \tx(0)\|_\m = O(N^{-1/2}\sqrt{\log N})$ for some fixed
$\tx(0) \in \xxs_\m$, which thus also consists only of $1$-individuals.  
Then, in the transition rates for any Markov process approximating 
individual dynamics, the argument~$x(t)$ can be taken to be~$\tx(t)$, where~$\tx$ 
denotes the solution of~\Ref{deterministic} starting at~$\tx(0)$,  provided that 
$K_N = O(N^{\b})$ for any $\b < 1/2$, because
then $\|x^N(0) - \tx(0)\|_\m = O(N^{-1/2}\sqrt{\log N})$ also.  But since~$\tx(0)$
consists only of $1$-individuals, so does~$\tx(t)$ for all $t > 0$,
and~$\tx(t)$ is the solution to the deterministic equation for the initial
model of Metz \& Gyllenberg~(2001), with the parameters of the resident population.

Since a $2$-juvenile, once arrived in a patch, never leaves it,
the development of the introduced species is best described in terms of the
evolution of the patches that $2$-juveniles reach.  Each such patch
can be treated as an `individual',  and the $2$-migrants
that leave it as its offspring, up to the time at which the patch contains no more
$2$-individuals.  This patch process, of a `$p$-individual', can thus be interpreted as a 
life history process~$W$,
beginning with the juvenile $2$-migrant, whose offspring are the $2$-migrants that
leave its chosen patch.  The entire process begins with a group of~$K_N$ juvenile $2$-migrants,
and the $2$-migrant offspring of the resulting $p$-individuals in turn initiate new $W$-processes, 
so that the entire process, if the bound deduced from Corollary~\ref{individual-group}
is small, can be approximated by
a Crump--Mode--Jagers (CMJ) branching process (Crump \& Mode (1968a,b),
Jagers~(1968);  see also Jagers (1975, Chapter~6)).

Let $W(t) = ((i,j),m)$ indicate that, at time~$t$, the patch contains~$i$ $1$-individuals
and~$j$ $2$-individuals, and that~$m$ $2$-migrants have left the patch
up to time~$t$; if $(i,j)$ is replaced by~$\D$, this indicates that the initial juvenile 
and all of its offspring that did not migrate, if there were any, have died, and~$D_2$ is
used when the state consists of the single juvenile $2$-migrant, before it reaches a patch.
The transition rates of~$W$ at time~$t$ can then be expressed as 
\eq\label{individual-rates-MG-2}
\begin{array}{rlllllr}
  ((i,j),m) & \to &((i,j+1),m) 
    &\mbox{at rate}\quad j\l_{ij}^*(1-d_{ij}^*) ; &i\ge0,\,j\ge1 \\[0.5ex]
  ((i,j),m) & \to &((i+1,j),m) \\
    &&\qquad\hfill\mbox{at rate}&i\l_{ij}(1-d_{ij}) + \tx_D(t)\a s_{ij} ; &i\ge1,\, j\ge1 \\[0.5ex]
  ((i,j),m) & \to &((i-1,j),m) 
    &\mbox{at rate}\quad  i\m_{ij} ; &i\ge1,\,j\ge1 \\[0.5ex]
  ((i,j),m) & \to &((i,j),m+1) 
    &\mbox{at rate}\quad j\l_{ij}^* d_{ij}^* ; &i \ge 0,\,j\ge1 \\[0.5ex]
  ((i,j),m) & \to &((i,j-1),m) 
    &\mbox{at rate}\quad  j\m_{ij}^* ; &i\ge0,\,j\ge2 \\[0.5ex]
  ((i,j),m) & \to &(\D,m) 
    &\mbox{at rate}\quad \m_1^*\bone_{\{1\}}(j) + \g_{ij} ; &i\ge0,\, j\ge1 \\[0.5ex]
  (D_2,0)& \to &((i,1),0)
    &\mbox{at rate}\quad \a \tx_{i}(t)s_{i0}^*; &i\ge0  \\[0.5ex]
  (D_2,0)& \to &(\D,0) 
    &\mbox{at rate}\quad \m_D^* . &  
\end{array}
\en  
In particular, if the resident population started at an equilibrium of the deterministic
equations, so that $\tx(t) = \tx(0)$ for all~$t$, then these transition rates are time homogeneous.  
Note also that,
since the {\it per capita\/} birth rate of the second variety is uniformly bounded
over all patch compositions, comparison with a linear pure birth process shows that
the expectation of the square of the number of $2$-individuals that 
were ever alive during $[0,T]$ is bounded by $c_T K_N^2$, for a suitable $c_T < \infty$.
Hence the probability that any $2$-migrant, whenever it was
born, arrives during $[0,T]$ in a patch which
has already been visited by individuals of the second variety is of order $O(N^{-1}K_N^2)$,
and this is asymptotically small if $K_N = O(N^{\b})$ for any $\b < 1/2$.

Thus, in view of Corollary~\ref{individual-group}, the evolution of the introduced species
over any finite time interval $[0,T]$,
measured in terms of the number of juvenile migrants, is the same as that of a CMJ--branching 
process, with probability of order $O(N^{-1+2\b})$.
The individual life history consists of a period of migration, followed either
by death (with probability $\m_D^*/S$, where $S := \m_D^* + \sio \a\tx_i(0) s_{i0}^*$) or arrival 
in a patch (of type $(i,0)$
with probability $\a \tx_i(0) s_{i0}^*/S$), after which its subsequent life history  
follows that of the Markov process with rates~\Ref{individual-rates-MG-2}, started in the
state $((i,1),0)$.  In particular, each transition of this process in which the third component 
increases corresponds to the birth of a new juvenile migrant.  If $P(i,j,t)$ denotes the probability
$\pr[(W_1(t),W_2(t)) = (i,j) \giv W(0) = (D_2,0)]$, then the mean intensity of the offspring
process is $m(t) := \sio \sji P(i,j,t) j\l_{ij}^* d_{ij}^* \,dt$, and the mean number of offspring
is $\bm := \int_0^\infty m(t)\,dt \le \infty$.  

The approximation using a branching process gives a lot of insight into the development of
the introduced species.  In particular, if the equation $\int_0^\infty e^{-\rho t} m(t)\,dt = 1$
has a solution~$\rho > 0$ (which has to be the case if $1 < \bm < \infty$), then the
introduced species, if it becomes established, grows exponentially with rate~$\rho$, and the
probability that it becomes established from an initial population of~$K$ juvenile migrants
is $1 - q^K$, where~$q$ is the extinction probability of the Galton--Watson process,
starting with a single individual, whose
offspring distribution is the distribution of the total number of offspring in the CMJ--process. 
If $\bm \le 1$, the introduced species dies out with probability one.
However, the current theorems only guarantee this approximation to be valid over a fixed time 
interval $[0,T]$, and then for~$N$ sufficiently large.  In Barbour, Hamza, Kaspi \& Klebaner~(2013),
the development of an introduced species, including the branching approximation, is considered
over much longer time intervals, but
in the context of {\it finite dimensional\/} Markov population processes. It would be
interesting to establish analogous results in the current context.

Metz \& Gyllenberg~(2001) made the (intuitively obvious) conjecture that, if the introduced 
species has exactly the same parameters as the original, and is introduced in equilibrium, 
then $\bm = 1$.  This is equivalent to saying that, in equilibrium, each migrant generates
a process that results in an average of exactly one new migrant.  They were, however, unable 
to give a proof of this.  If the random process for finite~$N$ were ergodic, it would be
natural to use arguments based on long term time averages as the basis of a proof.
However, the finite~$N$ process
is eventually absorbed in the zero population extinction state, so such arguments
cannot be used.  However, we sketch a proof of the
conjecture, under assumptions that include those of Metz \& Gyllenberg, in the appendix.

\subsection{Example 2: Kretzschmar's (1993) model}
In Kretzschmar's (1993) model of parasitic infection, $N$ denotes the initial number
of hosts, these playing the role of patches.  The index $i \in \Z_+$ denotes the number of
parasites living in the host.  The model has transitions of the form I--VI, with
$\l_{i,i-1} := i\m$, $\l_{i,i+1} := \l \f(x)$, $\b_0(x) := \b\sio x_i \th^i$ and
$\d_i := \k + i\a$, all other transition rates being zero; here, $0 \le \th \le 1$,
and $\f(x) := \sji jx_j/(c + \|x\|_1)$ for some $c > 0$.  It is shown in [BL]~(2012a,
Example~5.1) that, if the initial conditions satisfy \Ref{IC-1} and~\Ref{IC-2}, then 
the law of large numbers approximation~\Ref{LLN-approx-2} holds
with  $\e_N = k_T N^{-1/2}\sqrt{\log N}$ and $P_T(N,\e_N) = k_T' N^{-1}\log N$, for 
suitably chosen constants $k_T$, $k_T'$, where, as usual, $\m(i) = i+1$. It is
also easy to check that $D_Y(T,\d) < \infty$ for all $T$ and~$\d$.  The patch
process~$Y$ on $\Z_+\cup\D$ has transition rates at time~$t$ given by
\eq\label{patch-rates-K}
\begin{array}{rlllllr}
  i& \to &i+1 
    &\mbox{at rate}\quad  \l \f(x(t)), &\quad i\ge0; & \\[0.5ex]
  i& \to &i - 1 
    &\mbox{at rate}\quad i\m, &\quad i \ge1; & \\[0.5ex]
  i& \to &\D 
    &\mbox{at rate}\quad \k + i\a,  &\quad i \ge0.   
\end{array}
\en
One way of looking at this process is as a superposition of Poisson processes. 
Each parasite on arrival decides independently either to die or to kill the host,
with probabilities $\m/(\m+\a)$ and $\a/(\m+\a)$ respectively.  The time of
this event is exponentially distributed with mean $1/(\m+\a)$.  Independently,
the host is killed after an exponentially distributed time with mean~$1/\k$.
Because of the independence of marked Poisson streams, given that the host is
alive at time~$T$, the number of
parasites living in it has a Poisson distribution with mean
\[
    \int_0^T \l\f(x(t)) e^{-(\m+\a)(T-t)}\,dt.
\]
Thus a cohort consisting of $K_N$ hosts of given age~$T$ would exhibit an
approximately Poisson distribution of parasites per host, if $K_N = O(N^{\g})$ for some
$\g < 1/2$.  Thus, within age classes, Kretzschmar's model does not generate
over-dispersed distributions of parasites per host, though mixing over age
classes in a sample may be expected to do so.   Even then, if $\a$ and~$\k$ are much 
smaller than~$\m$, and~$x$ is in equilibrium, the departure from Poisson may not be 
very noticeable, unless there are many young hosts (with ages comparable to~$1/\m$) 
in the sample.

\section*{Appendix}
\setcounter{equation}{0}
In this section, we establish the conjecture of Metz \& Gyllenberg~(2001) discussed
above.  For this purpose, we can take their single type model, since all individuals
behave in the same way.
Let~$Z$ denote the CMJ-branching process associated with the process~$W$ of Example~1,
when the underlying process~$x$ is in equilibrium.  Suppose first that its mean~$\bm$
exceeds~$1$, so that its extinction probability~$q$ is less than~$1$.  In this case,
given any $M>0$, there exists a finite time~$T_M$ such that
\[
    \pr_1[Z(T_M) > M] \ >\ (1-q)/2,
\]
where $\pr_1$ denotes probability starting from a single migrant.  Starting the $x_N$-process
close to the equilibrium~$\bar x$, there are $d_N \approx N\bar x_D$ migrants at time~$0$.
We assume that $\bar x_D > 0$, which is true, for instance, under the irreducibility
condition introduced below.
Let~$Z_N^j$ denote the process of migrant descendants of the $j$-th of them.  As noted
above, it has distribution close to that of~$Z$ for large~$N$, by Theorem~\ref{individual-approximation}.  Set
$I_j := I[Z_N^j(T_M) > M]$, and let~$N$ be so large that $\ex I_j =: p_N > (1-q)/2$.  
Then, because any two of the processes $Z_N^j$ and~$Z_N^k$, $k\neq j$, are asymptotically
independent as $N\to\infty$, by Corollary~\ref{individual-group}, it follows that $\ex(I_jI_k)= p_N^2 + o(1)$
as $N\to\infty$, implying in turn that $S_N := \sum_{j=1}^{d_N} I_j$ has $\ex S_N > N\bar x_D (1-q)/2$
for all~$N$ large enough, and that $\var S_N = o(N^2)$.  Thus, by Chebyshev's inequality,
$\pr[MS_N \ge MN\bar x_D(1-q)/4] \to 1$ as $N\to\infty$.  But this contradicts~\Ref{LLN-approx-2}
if~$M$ is chosen such that $M(1-q)/4 > 1$,
because $MS_N \le N x^N_D(T_M)$, and~\Ref{LLN-approx-2} implies that 
$\pr[Nx^N_D(T_M) \le N\bar x_D(1+\e)] \to 1$ for any $\e > 0$.

The proof of contradiction if $\bm < 1$ is more involved.  Recall that~$m(\cdot)$ denotes
the mean offspring measure of the CMJ-branching process~$W =: W_0$ starting with $W_0(0) = (D_2,0)$. Let
$m_i(\cdot)$ denote the mean offspring measure for the initial individual in the process~$W_i$, 
starting with $W_i(0) = ((i,1),0)$.  All of its migrant children have mean offspring measure~$m$,
but the initial individual in general has a different measure.
Let $n_i(t)$ denote the mean number of migrants alive at time~$t$ in the process~$W_i$.
Then $n_i(t) = \int_0^t m_i(dv) n_0(t-v)$.  The assumption $\bm < 1$ implies that $\lti n_0(t) = 0$,
and so $\lti \sio i\bar x_i n_i(t) = 0$ also, if $\sio i\bar x_i \int_0^\infty m_i(dv) < \infty$,
by dominated convergence.  The latter is true, if $\sio i\bar x_i < \infty$ and if 
$\sup_i \int_0^\infty m_i(dv) = m^* < \infty$.

We now make four assumptions.  The first three are that $0 < \sio i\bar x_i < \infty$, 
that $\l^* := \sup_i \l_i < \infty$, and
that, for some $\e > 0$, there exists~$i_0$ such that $\m_i + \g_i \ge \l_i\{1 - (1-\e)d_i\}$ 
for all $i\ge i_0$.  
The fourth is an irreducibility assumption: we require that the birth, death and
catastrophe rates are such that a patch with~$i \ge 1$
occupants can evolve into a patch with~$i' \ge 0$ occupants, for $ i \neq i'  \le i^*$,
where~$i^*$ is the maximum possible number of occupants of a patch (infinity, if there is
no maximum); that $s_0 > 0$; and that $\l_i d_i > 0$ for some~$i \ge 1$.

The second of the assumptions ensures that mean proportion of the contribution to
$\ex X^N_D(t)$ arising from individuals in~$X^N(0)$ whose family trees do {\it not\/} remain coupled
to the corresponding branching process over any fixed interval $[0,T]$ is asymptotically negligible
as $N\to\infty$, for~$T$ fixed: the worst contribution from any such individual is $\exp\{\l^*T\}$,
and the proportion of them is asymptotically negligible as $N\to\infty$, 
by Theorem~\ref{individual-approximation}.  
The fourth assumption, together with~$\bm<1$, ensures that $\int_0^\infty m_i(dv) < \infty$ for each~$i$, 
since there is
then a positive probability that a migrant is at some time in a patch with $i-1$ other occupants,
and its total mean number of migrant offspring is finite.
The third assumption 
ensures that~$m^* < \infty$. This can be proved by analyzing a system of recurrence equations
satisfied by the quantities~$\int_0^\infty m_i(dv)$, showing that, in $i\ge i_0$,
$\int_0^\infty m_i(dv)$ is uniformly bounded by a quantity of the form $c_1 + c_2\int_0^\infty m_{i_0}(dv)$.  
This, combined with the first assumption,
shows that the contribution to $N^{-1}\ex X^N_D(t)$ arising from individuals for which the coupling is
maintained over $[0,T]$ is asymptotically close to $\sio i\bar x_i n_i(T)$ as $N\to\infty$, 
which can be made as small as desired by choosing~$T$ large enough.  Furthermore, because $\l^* < \infty$,
the variance of the contribution to $X^N_D(T)$ from any individual is uniformly bounded in~$i$, and
the correlation between the contributions from pairs of different individuals is asymptotically
small in~$N$, by Corollary~\ref{individual-group}.  Hence, with ever higher probability as 
$N\to\infty$, $N^{-1}X^N_D(T)$ stays close
to its (small) expectation.  However, for~$x_N$ in equilibrium, it has also to be asymptotically close 
to the fixed value $\bar x_D$, by~\Ref{LLN-approx-2}, and this is a contradiction, if $\bar x_D > 0$;
and this is the case, because of the fourth assumption.  

Metz \& Gyllenberg~(2001) actually assume that there is a largest index $i^* < \infty$.
In this case the conditions are typically satisfied, if~$i_0$ is taken equal to the largest index~$i^*$
in the third assumption.
However, there are some trivial possibilities where their conjecture is not true.  For instance,
if $i^*=1$ and $\mu_1 + \g_1 = 0$ and $\l_1 > 0$ (in which case, from the definition of~$i^*$,
$d_1 = 1$, and also $\s_1=0$), and if $\l_0 > 0$, one would have $\bar x_0 = 0$, $\bar x_1=1$ and $\bm = 0$,
but $\bar x_D = \l_1/\mu_D > 0$.  Of course, this is a biologically implausible scenario, and 
it violates both the third and fourth assumptions.

\section*{Acknowledgement}
ADB thanks the Department of Statistics and Applied Probability at the National University of Singapore,
and the mathematics departments of the University of Melbourne and Monash University, for
their kind hospitality while much of the work was undertaken. MJL is grateful to the Department of Statistics and Applied Probability at the National University of Singapore,
and the Department of Mathematics at the University of Melbourne for their hospitality.

\end{document}